\documentclass[BCOR8mm,DIV14]{scrartcl}
\usepackage{bbm}
\usepackage[english]{babel}
\usepackage[all]{xy}
\usepackage{amsmath,amssymb,amsthm}
\hyphenation{mod-ule}
\setlength{\parindent}{10mm}
\theoremstyle{definition}
\newtheorem{theorem*}{Theorem}
\newtheorem{theorem}{Theorem}

\newtheorem{corollary}{Corollary}


\begin{document}
\title{H\"older Functionals and Quotients}
\author{VOLKER W. TH\"UREY}                  
\maketitle
  {\it Keywords}: \ elementary inequality  \ \  2010 {\it Mathematical Subject Classification}: \ 26D15  \\
\begin{abstract}   \centerline{Abstract}  
       We describe an inequality of finite or infinite sequences of real numbers and their quotients. \
       More precisely, we compare the  quotient of H\"older functionals of two sequences of numbers with
       the sum of their quotients. In the last section we investigate the `wideness' of the inequality,
       i.e. we show that  both the inequality can converge into an equality, and the difference between 
       the two sides of the inequality can be arbitrary large.   
\end{abstract}
 \section{Introduction}
   For convenience, we restrict our considerations on positive real numbers, i.e. 
  $$   a_1, a_2, a_3, \ \ldots \ \ \  b_1, b_2, b_3, \ \ldots  \ \ > 0 \ , $$  
     to avoid problems  with a denominator $0$, otherwise we have to discuss cases with expressions like 
     `$\frac{0}{0}$' and  `$\frac{a}{0}$'.                       
     Further, it is easy to extend the coming theorems on negative numbers by using the modulus of a number.  
      
     We start with a known statement wich is called the `Rearrangement Inequality'.  
\begin{theorem}     \label{thefirstone}   \it
        Let us take two finite ordered sequences of positive real numbers of the same length, i.e. we have  
        $$   0 < a_1 \leq  a_2 \leq a_3 \leq \ \ldots \ \leq a_n \ \  \text{ and } \ \ 
                             0 < b_1 \leq b_2 \leq b_3 \leq \ \ldots \ \leq b_n  $$ 
        for a natural number $n$. \ Let \ $ \sigma $ be any permutation on the set 
        \ $ \{1,2,3, \ldots , n \} $.  We have the inequality of sums of fractions
   $$         \sum_{k=1}^{n} \ \frac{a_k}{b_k} \ \ \leq \ \
              \sum_{k=1}^{n} \ \frac{a_k}{b_{\sigma(k)}} \ \ \leq \ \ \sum_{k=1}^{n} \ \frac{a_k}{b_{n-k+1}} \ . $$
\end{theorem}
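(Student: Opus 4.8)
The plan is to derive both inequalities at once from a single \emph{exchange argument} applied to the functional $S(\sigma):=\sum_{k=1}^{n}a_k/b_{\sigma(k)}$ on the symmetric group $\{1,\dots,n\}\to\{1,\dots,n\}$. The one computation that drives everything is the two--term identity: for indices $i<j$ and $p<q$,
$$\left(\frac{a_i}{b_p}+\frac{a_j}{b_q}\right)-\left(\frac{a_i}{b_q}+\frac{a_j}{b_p}\right)=(a_j-a_i)\left(\frac{1}{b_q}-\frac{1}{b_p}\right)\le 0,$$
where the sign is forced by the hypotheses, since $a_j\ge a_i$ and $b_q\ge b_p$ (so $1/b_q\le 1/b_p$). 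In words: pairing the larger numerator with the larger denominator never increases a sum of such fractions. (Equivalently one could invoke the classical Rearrangement Inequality for the increasing sequences $(a_k)$ and $(1/b_{n-k+1})$, but I prefer the self-contained version.)

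First I would prove the left inequality $S(\sigma)\ge S(\mathrm{id})=\sum_k a_k/b_k$. Call a pair $(i,j)$ with $i<j$ an \emph{inversion} of $\sigma$ when $\sigma(i)>\sigma(j)$. If $\sigma\ne\mathrm{id}$ then $\sigma$ has an inversion, hence also an \emph{adjacent} one, i.e. some index $i$ with $\sigma(i)>\sigma(i+1)$. Let $\sigma'$ be $\sigma$ with the two values at positions $i$ and $i+1$ interchanged. Applying the displayed estimate with $p=\sigma(i+1)<q=\sigma(i)$ gives $S(\sigma)\ge S(\sigma')$, while the inversion count of $\sigma'$ is exactly one less than that of $\sigma$. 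Since the inversion count is a nonnegative integer, iterating this ``bubble sort'' terminates at $\mathrm{id}$ after finitely many non-increasing steps, whence $S(\sigma)\ge S(\mathrm{id})$.

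For the right inequality I would run the mirror image. Let $\tau$ be the reversal $\tau(k)=n-k+1$, so that $S(\tau)=\sum_k a_k/b_{n-k+1}$. If $\sigma\ne\tau$, then not every pair is an inversion of $\sigma$, so $\sigma$ has a ``non-inversion'', and in fact an adjacent one: some $i$ with $\sigma(i)<\sigma(i+1)$. Interchanging those two values produces $\sigma''$ with exactly one more inversion, and the same display (now with $p=\sigma(i)<q=\sigma(i+1)$) gives $S(\sigma)\le S(\sigma'')$. Iterating until no non-inversions remain lands at $\tau$ after finitely many non-decreasing steps, so $S(\sigma)\le S(\tau)$.

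The substantive content is just the one-line two-term estimate; the only point requiring care is the termination bookkeeping --- that a non-identity permutation always admits an \emph{adjacent} inversion and that deleting one strictly lowers the (finite, nonnegative) inversion count, together with the mirror statements used for the upper bound. The non-strict hypotheses $a_i\le a_j$, $b_i\le b_j$ create no difficulty, since the conclusion is itself non-strict.
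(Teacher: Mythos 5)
Your proposal is correct and complete. Note, however, that the paper does not actually prove this theorem: it simply cites Hardy--Littlewood--P\'olya, \emph{Inequalities}, so there is no internal argument to compare yours against. What you give is the classical self-contained proof of the Rearrangement Inequality, specialized to the increasing sequence $(a_k)$ paired against the decreasing sequence $(1/b_k)$: the two-term exchange identity $(a_j-a_i)\bigl(\tfrac{1}{b_q}-\tfrac{1}{b_p}\bigr)\le 0$ is exactly the right engine, the reduction to \emph{adjacent} inversions is handled correctly (a permutation with no adjacent inversion is increasing, hence the identity; one with no adjacent non-inversion is decreasing, hence the reversal), and the termination bookkeeping via the inversion count --- which changes by exactly $\pm 1$ under an adjacent transposition and is bounded between $0$ and $\binom{n}{2}$ --- is sound. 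The only (trivial) point left implicit is that the terms of $S$ at positions other than $i$ and $i+1$ are unaffected by the swap, so the two-term estimate really does control $S(\sigma)-S(\sigma')$. Your version has the advantage of making the paper self-contained where it currently is not.
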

\begin{proof}
         We can find it in \cite{HLP}.   
\end{proof}
    We say \ $ \vec{a} $ \ for any n-tuple  $( a_1,a_2,a_3, \ldots , a_n )$ of positive real numbers.  \\
        We fix any non-zero real number \ $p$. \     Note that the following expression
    $$  \|\vec{a}\|_p := \sqrt[p]{a_1^{p} + \ a_2^{p} + \ a_3^{p} + \ \ldots \ + \ a_n^{p} } $$
       is defined for all \ $p\neq 0$, since all $ a_k $ are positive. We call 
       $  \|\vec{a}\|_p $ a { \it H\"older functional} of the tuple \ $ \vec{a} = (a_1,a_2, \ldots , a_n)$.      
         
   Now we are prepared for the main theorem.       
 \begin{theorem}[Main inequality]      \label{thesecondone}   \it
          Let us take  H\"older functionals of two finite sequences of positive real numbers of the same
          length  $ n \in {\mathbb N}, \ n \geq 2 $. \ That means we have \ \ 
          $ 0 < a_1, a_2, a_3, \ \ldots \ , \ a_n$ \ \  and $ 0 < b_1, b_2, b_3, \ \ldots \ , \ b_n $.  \ \ 
          For all real numbers \ $p \neq 0$ \ there is the following strict inequality
   $$     \frac{\|\vec{a}\|_p}{\|\vec{b}\|_p} \ 
          = \  \sqrt[p]{ \frac{ a_1^{p} + \ a_2^{p} + \ a_3^{p} + \ \ldots \ + \ a_n^{p} }
          { b_1^{p} + \ b_2^{p} + \ b_3^{p} + \ \ldots \ + \ b_n^{p} } } \ \ <  \ \ 
          \sum_{k=1}^{n} \ \frac{a_k}{b_k} \  .                             $$
     The inequality remains valid also for the limits \ $ p = -\infty, \ p= \infty, \ \text{and} \ p=0 $.  
          Further, the inequality is sharp, i.e. in this generality it can not be improved. 
          Further, for $ n = 1 $ we have a trivial equality.
\end{theorem}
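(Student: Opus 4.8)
\quad The plan is to reduce the whole statement to one elementary observation: since $n\ge 2$, every single ratio $a_k/b_k$ is strictly smaller than the full sum $\lambda:=\sum_{k=1}^{n}\frac{a_k}{b_k}$. Writing $M:=\max_{1\le k\le n}\frac{a_k}{b_k}$ and picking an index $i$ with $a_i/b_i=M$, I would first note that, since all terms are positive and there is at least one index $j\ne i$, one has $\lambda=\frac{a_i}{b_i}+\sum_{j\ne i}\frac{a_j}{b_j}>M$, and also $a_k\le M\,b_k$ for every $k$. I then plan to prove the slightly stronger bound $\frac{\|\vec a\|_p}{\|\vec b\|_p}\le M$ and combine it with $M<\lambda$. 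For finite $p>0$ this is immediate from monotonicity: $x\mapsto x^{p}$ is increasing on $(0,\infty)$, so $a_k^{p}\le M^{p}b_k^{p}$, and summing and taking $p$-th roots (again an increasing operation) gives $\|\vec a\|_p\le M\,\|\vec b\|_p$. For finite $p<0$ I would run the same computation watching the sign: $x\mapsto x^{p}$ is now decreasing, so $\sum_k a_k^{p}\ge M^{p}\sum_k b_k^{p}$, but $y\mapsto y^{1/p}$ is decreasing as well ($1/p<0$), so the inequality flips back and once more $\|\vec a\|_p\le M\,\|\vec b\|_p$. Dividing by $\|\vec b\|_p$ then covers every $p\in{\mathbb R}\setminus\{0\}$ (this part does not actually use the Rearrangement Inequality).

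Next I would treat the limiting exponents directly, using $\|\vec a\|_{+\infty}=\max_k a_k$ and $\|\vec a\|_{-\infty}=\min_k a_k$. For $p=+\infty$: if $a_i=\max_k a_k$ then $\|\vec b\|_{+\infty}\ge b_i$, so $\frac{\|\vec a\|_{+\infty}}{\|\vec b\|_{+\infty}}\le\frac{a_i}{b_i}\le M<\lambda$; for $p=-\infty$: if $b_j=\min_k b_k$ then $\|\vec a\|_{-\infty}\le a_j$, so $\frac{\|\vec a\|_{-\infty}}{\|\vec b\|_{-\infty}}\le\frac{a_j}{b_j}\le M<\lambda$. The case $p=0$ needs more care, since both $\|\vec a\|_p$ and $\|\vec b\|_p$ diverge when $n\ge2$; here I would use the classical limit $\left(\frac1n\sum_k x_k^{p}\right)^{1/p}\to\left(\prod_k x_k\right)^{1/n}$ as $p\to0$ for the numerator and the denominator separately, which yields $\lim_{p\to 0}\frac{\|\vec a\|_p}{\|\vec b\|_p}=\left(\prod_{k=1}^{n}\frac{a_k}{b_k}\right)^{1/n}$, the geometric mean of the ratios; by the AM--GM inequality this is $\le\frac1n\sum_{k}\frac{a_k}{b_k}<\lambda$.

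For $n=1$ there is nothing to do: $\|\vec a\|_p=a_1$, $\|\vec b\|_p=b_1$ for every $p$ (and for the three limits), so $\frac{\|\vec a\|_p}{\|\vec b\|_p}=\frac{a_1}{b_1}=\sum_{k=1}^{1}\frac{a_k}{b_k}$. For sharpness it is enough to exhibit \emph{one} family of pairs of sequences with $n\ge 2$ along which the quotient of the left- and right-hand sides tends to $1$, since that already forbids replacing the constant $1$ by anything smaller valid in this generality. I would take, for instance, $n=2$, $b_1=b_2=1$, $a_1=1$, $a_2=\varepsilon$ and $p=+\infty$: then $\frac{\|\vec a\|_{+\infty}}{\|\vec b\|_{+\infty}}=1$ while $\sum_k\frac{a_k}{b_k}=1+\varepsilon$, so the ratio of the two sides equals $\frac{1}{1+\varepsilon}\to1$ as $\varepsilon\to0^{+}$. (For finite $p\ne0$ one can instead use $a_1=A$, $b_1=1$, $a_2=b_2=t$ with $t\to0^{+}$ when $p>0$ and $t\to+\infty$ when $p<0$: then $\frac{\|\vec a\|_p}{\|\vec b\|_p}\to A$ and $\sum_k\frac{a_k}{b_k}=A+1$, so the ratio of the two sides tends to $\frac{A}{A+1}$, which tends to $1$ as $A\to\infty$.)

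The main obstacle I anticipate is the exponent $p=0$: one must argue carefully that $\frac{\|\vec a\|_p}{\|\vec b\|_p}$ still converges — namely to the quotient of geometric means — even though $\|\vec a\|_p$ and $\|\vec b\|_p$ individually tend to infinity, and then see that this limit stays strictly below $\sum_k a_k/b_k$. Beyond that, the only real choices are bookkeeping the sign of $p$ when taking powers and roots, and selecting a convenient extremal family to witness sharpness; neither should be difficult.
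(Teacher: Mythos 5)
Your proof is correct, and it takes a genuinely different --- and considerably shorter --- route than the paper. The paper proves the case $n=2$ by a four-case analysis (on the sign of $p$, on whether $p\le 1$, and on the relative size of the two ratios) and then runs an induction on $n$ by absorbing the last two coordinates into the single coordinate $\left(a_n^{p}+a_{n+1}^{p}\right)^{1/p}$; the limits $p=\pm\infty$ and $p=0$ are computed separately at the end. Your argument replaces all of the finite-$p$ work by the single observation that $\frac{\|\vec a\|_p}{\|\vec b\|_p}\le M:=\max_k \frac{a_k}{b_k}$ (equivalently: this quotient is a weighted power mean of the ratios $a_k/b_k$ with weights $b_k^{p}/\sum_j b_j^{p}$, hence lies between their minimum and maximum), after which strictness is free because for $n\ge 2$ the sum $\sum_k a_k/b_k$ strictly exceeds any single one of its terms. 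This buys several things at once: no induction, no case distinction beyond tracking the sign of $p$ through two monotone maps, the stronger intermediate estimate $\frac{\|\vec a\|_p}{\|\vec b\|_p}\le M$, a trivial treatment of $p=\pm\infty$, and the insight that the ordering/rearrangement discussion in the paper is not actually needed. Your $p=0$ case is handled essentially as in the paper (limit of normalized power means equals the geometric mean, then AM--GM), and your sharpness examples are valid; in particular the family $\vec a=(A,t)$, $\vec b=(1,t)$ makes the ratio of the two sides tend to $A/(A+1)$, which rules out any constant factor $c<1$. The only nit: as $p\to 0^{-}$ the functionals $\|\vec a\|_p$ tend to $0$ rather than to $\infty$ (they blow up only as $p\to 0^{+}$), but this side remark does not affect your argument, since you pass to the quotient of normalized power means where the $n^{1/p}$ factors cancel.
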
    
        Note that the arrangement of the numbers \
        $ a_1, a_2, a_3, \ \ldots \ , \ a_n$ \ and \ $ b_1, b_2, b_3, \ \ldots \ , \ b_n $ \ 
        does not affect the left hand side of the inequality but the right hand side. If the inequality is true,
        it must be true in the `worst' case, i.e. for the arrangement 
    $$  0 < a_1 \leq a_2 \leq a_3 \leq \ldots \leq a_n \ \ \ \text{and} \ \ \  
                             0 < b_1 \leq b_2 \leq b_3 \leq \ldots \leq b_n \ ,  $$  
        see Theorem  \eqref{thefirstone}. By this arrangement the right hand side is as small as possible.
                           
 In the next section we prove the theorem, which is the main contribution of this article.
        In the last section we show that the difference between both sides of the inequality can become 
        arbitrary  small and arbitrary big. 
 \section{The Proof of the Inequality}       
 \begin{proof}  
        The way of proving the theorem is not surprising.  
        We prove it for  $n=2$, which is the harder part,   and after that we go the induction step from \ 
        $ n $ to $ n+1$. 
                                                                                                             
   \underline{Beginning of the induction:} \quad Let $ n = 2 $.  
            Let us take positive numbers \ $ a_1, a_2, b_1, b_2 $ with the arrangement \
            $ 0 < a_1 \leq a_2 $ \ and \ $ 0 < b_1 \leq b_2 $.     We want to prove that
   \begin{align}      \label{ungleichung eins}         
       \sqrt[p]{\frac{a_1^{p} + \ a_2^{p}}{b_1^{p} + \ b_2^{p}}} \  < \ \frac{a_1}{b_1} + \frac{a_2}{b_2}    
   \end{align}
            holds for all real numbers \ $ p \neq 0 $.  \ Since \ $ a_1 \leq a_2 $ \ and \ $ b_1 \leq b_2 $ \    
            there are two positive numbers \ $ 0 <  \alpha, \beta \leq 1 $ \ with \ $ a_1 = \alpha \cdot a_2 $ \ and \
             $ b_1 = \beta \cdot b_2 $. \ Inequality \eqref{ungleichung eins} is equivalent to
   \begin{align}     \label{ungleichung zwei}           
         \sqrt[p]{\frac{\alpha^{p} + 1}{\beta^{p} + 1}} \  < \  \frac{\alpha}{\beta} +  \frac{1}{1}                                                                            \  =  \  \frac{\alpha}{\beta}  +   1 \ .
    \end{align}     
    We distinguish four cases. The Case {\bf D} deals with a negative \ $ p $. 
     \begin{itemize}   
               \item Case {\bf A}: \ \ $  0 < p $ \ and \  $ \alpha  \leq \beta $, 
               \item Case {\bf B}: \ \ $  0 < p \leq 1 $ \ and \  $ \beta < \alpha $, 
               \item Case {\bf C}: \ \ $  1 < p$  \ and \  $ \beta < \alpha $,   \nopagebreak[40]
               \item Case {\bf D}: \ \ $  p < 0$.
     \end{itemize}  
    \underline{Case {\bf A}}: We assume \ $ 0 < p $ \ and \  $ \alpha  \leq \beta $. We have  
        \begin{align}     \label{ungleichung drei}           
              \sqrt[p]{\frac{\alpha^{p} + 1}{\beta^{p} + 1}} \ \leq \ \sqrt[p]{\frac{\alpha^{p} + 1}{\alpha^{p} + 1}} \
              = \ 1 \ < \ \  \frac{\alpha}{\beta}  +   1 \ .  
        \end{align}  
     \underline{Case {\bf B}}: We assume \ $  0 < p \leq 1 $ \ and \  $ \beta < \alpha $. \ We set \ 
         $ q := \frac{1}{p} $, \ hence \ $ 1 \leq q $. \ We want to prove Inequality  \eqref{ungleichung zwei},
         we write it again as    
    \begin{align}     \label{ungleichung vier}           
         \sqrt[p]{\frac{\alpha^{p} + 1}{\beta^{p} + 1}} \ = \ 
         \left[ \frac{ \sqrt[q]{\alpha} + 1}{  \sqrt[q]{\beta} + 1} \right]^{q}
         \  < \  \frac{\alpha}{\beta}  +   1 \ .
    \end{align}     
    We have the following chain of equivalences to the desired Inequality \eqref{ungleichung zwei}  
    \begin{align*}   
        \left[ \frac{ \sqrt[q]{\alpha} + 1}{  \sqrt[q]{\beta} + 1} \right]^{q}
        \  < \  \frac{\alpha}{\beta}  +  1 \ \
        \Longleftrightarrow  \ \ &  \left[ \sqrt[q]{\alpha} + 1 \right]^{q}
        \  < \ \left[ \sqrt[q]{\beta} + 1 \right]^{q} \cdot \left[ \frac{\alpha}{\beta}  +  1 \right] \\
        \Longleftrightarrow  \ \ &  \sqrt[q]{\alpha} + 1 
        \  < \ \left[ \sqrt[q]{\beta} + 1 \right] \cdot \sqrt[q]{ \frac{\alpha}{\beta}  +  1 } \\
        \Longleftrightarrow  \ \ &  \sqrt[q]{\alpha} + 1
        \  < \ \sqrt[q]{\alpha + \beta} \ + \ \sqrt[q]{ \frac{\alpha}{\beta}  +  1 } \ .
   \end{align*}  
   The last inequality is obvious, which finishes  Case {\bf B}.      \\   
   \underline{Case {\bf C}}: We assume \ $  1 < p $ \ and \  $ \beta < \alpha $. \
    We can write the chain of inequalities    
     \begin{align*}   
         \sqrt[p]{\frac{\alpha^{p} + 1}{\beta^{p} + 1}} \ < \ 
         \sqrt[p]{\frac{\alpha^{p} + 1}{1}} \ \leq \ \sqrt[p]{2} \ < \ 2 \ < \  \frac{\alpha}{\beta} +  1 \: ,               \end{align*} 
     and  Case {\bf C} is proven. \ Therefore Inequality \eqref{ungleichung eins} is shown for all real $ p > 0 $.  \\
    \underline{Case {\bf D}}: We investigate the case of a negative real number $ p $, i.e. let \ $ p < 0 $. 
    We define \ $ q := -p $, \ i.e. $ q $ is a positive number, hence we can refer to the first three cases.  We write
    \begin{align*}   
         \sqrt[p]{\frac{\alpha^{p} + 1}{\beta^{p} + 1}} \ = \ \sqrt[-q]{\frac{\alpha^{-q} + 1}{\beta^{-q} + 1}} \ = \ 
         \sqrt[q]{\frac{\left[\frac{1}{\beta}\right]^{q}+1}{\left[\frac{1}{\alpha}\right]^{q}+1}} \ < \ \ 
         \frac{\frac{1}{\beta}}{\frac{1}{\alpha}} + \frac{1}{1} \ = \  \frac{\alpha}{\beta} + 1 \ .
    \end{align*} 
    This shows Case {\bf D}, and the last of four cases to prove the beginning of the 
    induction with \ $n=2$ \ is done.   
                  
    \underline{Induction step:} \quad Let the theorem be proven for a natural number $ n \geq 2 $.  We prove it for
    the next  number \ $ n + 1 $.  Let us take two sets of $ n+1 $ positive numbers, i.e. we assume \
    $ a_1, a_2, a_3, \ \ldots \ , \  a_n, a_{n+1} \ \ \text{and} \ \ 
     b_1, b_2, b_3, \ \ldots \ , \ b_n, b_{n+1} \ > \ 0 \ . $    \ We have  
  \begin{align*}       
          \  \sqrt[p] { \frac{ a_1^{p} + a_2^{p} + \ \ldots \ + a_n^{p} + a_{n+1}^{p} }
                              { b_1^{p} + b_2^{p} + \ \ldots \ + b_n^{p} + b_{n+1}^{p}} } \ \ = \ & \ 
          \sqrt[p]{ \frac{  \left( \sum_{k=1}^{n-1} a_k^{p} \right) \ + \ a_{n}^{p} \ + \ a_{n+1}^{p}}
                         {  \left( \sum_{k=1}^{n-1} b_k^{p} \right) \ + \ b_{n}^{p} \ + \ b_{n+1}^{p}}  }   \\  
           \ \ = \ & \    \sqrt[p]{ \frac{  \left( \sum_{k=1}^{n-1} a_k^{p} \right) \ + \ 
                         \left[   \sqrt[p] {  a_{n}^{p} \ + \ a_{n+1}^{p}   }  \right]^{p} }
                           {  \left( \sum_{k=1}^{n-1} b_k^{p} \right) \ + \ 
                          \left[ \sqrt[p] { b_{n}^{p} \ + \ b_{n+1}^{p}}  \right]^{p}  }  }     \\  
             \ \  < \ & \  \left( \sum_{k=1}^{n-1} \frac {a_k}{b_k}  \right) \ + \    
             \frac{ \sqrt[p] { a_{n}^{p} \ + \ a_{n+1}^{p}}} { \sqrt[p] { b_{n}^{p} \ + \ b_{n+1}^{p}}}    \\ 
               \ \  < \ & \   \left( \sum_{k=1}^{n-1} \frac {a_k}{b_k}  \right) \ + \ 
               \frac{a_n}{b_n} \ + \    \frac{a_{n+1}}{b_{n+1}} \ \ .       
    \end{align*}     
    This was the induction step \ $ n \rightarrow n + 1 $, \ and the inequality of  Theorem \eqref{thesecondone} 
          is proven.
                         
      To complete the proof we have to consider the cases
          $ p = \infty, \ p = -\infty$ \ and \ $ p = 0 $. Because the inequality is proven for real numbers \ 
          $p \neq 0$,   it should be valid also for the limits  $ p = \infty, \ p = -\infty$ \ and \ $ p = 0 $. 
          But we prefer to compute these three cases. \
          Finally, we say something about the statement that `the inequality is sharp'.  
                                
    For an $n$-tuple \ $ \vec{a} = (a_1, a_2, \ \ldots , \ a_n) \in \mathbb{R}^{n}$ \ of positive numbers let \\
    \centerline   {$ A := \max\{ a_1, a_2, \ \ldots \ , \ a_n \} $ \ \ and \ \ 
                   $ a := \min\{ a_1, a_2, \ \ldots \ , \ a_n \} $. }  \\
    The following limits  are well known and easy to proof. 
    $$   \lim_{p\rightarrow +\infty} ( \|\vec{a}\|_p ) = A \ , \  \text{ and }  \ \                                   
         \lim_{p\rightarrow -\infty} ( \|\vec{a}\|_p ) = a   \ .                      $$    
     For  $n$-tuples $ \vec{a}$ and $ \vec{b} $  (without restriction of generality) we choose the arrangements   \\ 
    \centerline{ $  0 < a = a_1 \leq  a_2 \leq a_3 \leq \ \ldots \ \leq a_n = A \ \  \text{ and } \ \ 
                             0 < b := b_1 \leq b_2 \leq b_3 \leq \ \ldots \ \leq b_n =: B $.  }      \\
    It follows very easily that the  limits are   
    $$   \lim_{p\rightarrow +\infty} \ \ \left( \frac{\|\vec{a}\|_p}{\|\vec{b}\|_p} \right) \ \ = \ \
          \frac{A}{B} \ \  < \ \  \sum_{k=1}^{n} \ \frac{a_k}{b_k} \ \quad \text{and} \ \quad 
         \lim_{p\rightarrow -\infty} \ \ \left( \frac{\|\vec{a}\|_p}{\|\vec{b}\|_p} \right) \ \ = \ \
          \frac{a}{b} \ \  < \ \  \sum_{k=1}^{n} \ \frac{a_k}{b_k} \ .          $$     
    The  case $p = 0 $  needs more attention. 
                    
    We abbreviate the  left hand side of the inequality in Theorem \eqref{thesecondone} by                                    $$    { \mathsf LHS}_p  \ := \   \frac{\|\vec{a}\|_p}{\|\vec{b}\|_p} \ = \
            \ \frac{\sqrt[p]{a_1^{p} + \ a_2^{p} + \ a_3^{p} + \ \ldots \ + \ a_n^{p}}}
          { \sqrt[p]{b_1^{p} + \ b_2^{p} + \ b_3^{p} + \ \ldots \ + \ b_n^{p} } }             $$ 
     for arbitrary $n$-tuples \ $ \vec{a} $ and  $ \vec{b} $ of positive numbers. \
           Instead of \ ${\mathsf LHS}_p$ we consider $ \log({\mathsf LHS}_p)$. If we set \ $ p := 0 $ \ in 
           $ \log({\mathsf LHS}_p)$  we would have a numerator and a denominator $0$, hence 
           we can use the  rules of L'Hospital.
           We compute the limit   $ \log({\mathsf LHS}_p)$  for $ p \rightarrow 0 $, and we calculate 
           with some effort
      $$       \lim_{ p \rightarrow 0} (\log[{\mathsf LHS}_p]) \ = \ 
               \lim_{ p \rightarrow 0} \left( \frac{1}{p} \cdot 
               \log \left[ \ \frac{ a_1^{p} + \ a_2^{p} + \ \ldots \ + \ a_n^{p} }
               { b_1^{p} + \ b_2^{p} + \ \ldots \ + \ b_n^{p} }    \right] \right) 
               \ = \  \frac{1}{n} \cdot  \sum_{k=1}^{n} \log( a_k ) - \log( b_k ) \ .   $$
       We have \ $ \lim_{p \rightarrow 0} \ [{\mathsf LHS}_p] 
               \ = \ \lim_{p \rightarrow 0} \ [ \exp( \log({\mathsf LHS}_p )) ] 
               \ = \ \exp \left(  \lim_{p \rightarrow 0} \ [ \log({\mathsf LHS}_p)]  \right) \ $. 
     Hence we get the limit \        
    $$      \lim_{p\rightarrow 0} \ \ \left( \frac{\|\vec{a}\|_p}{\|\vec{b}\|_p} \right) \ = \
            \lim_{p \rightarrow 0} \ [{\mathsf LHS}_p]  \ = \ 
            \exp \left( \frac{1}{n} \cdot  \sum_{k=1}^{n} \log( a_k ) - \log( b_k ) \right)
            \ = \  \sqrt[n]{ \prod_{k=1}^{n} \ \frac{a_k}{b_k}  }  \ \ .                  $$
    Since there is the well-known inequality between the geometric and the arithmetic mean 
            we finally get the desired inequality
    \begin{align}        
              \lim_{p\rightarrow 0} \ \ \left( \frac{\|\vec{a}\|_p}{\|\vec{b}\|_p} \right) \ = \
              \sqrt[n]{ \prod_{k=1}^{n} \ \frac{a_k}{b_k}  }  \ \leq \  
              \frac{1}{n} \cdot \left( \sum_{k=1}^{n} \frac {a_k}{b_k}  \right) \  < \   
              \left( \sum_{k=1}^{n} \frac {a_k}{b_k}  \right) \ ,       
    \end{align}          
    and the case  $ p=0$ of our main Theorem \eqref{thesecondone} is proven.    
     
    The last point we have to discuss is the remark in Theorem \eqref{thesecondone} that  
    `in this generality it can not be improved'. It means that there are special cases such that
    instead of an inequation we almost have an equation.     \\
    Let $ p $ be any positve number, and let   \\
    \centerline{ $ b_1 = b_2 = b_3 = \ \ldots \ = b_n := 1 $, \ \text{and also} \ $ a_n := 1 $.  }
     Further define \ 
     $ a_1 = a_2 = \ \ldots \ = a_{n-1} :=  \frac{1}{p} \: , \ \text{and the tuples} \ \
                 \vec{a} := (a_1,a_2, \ldots , a_n) \ \ \text{and} \ \ \vec{b} := (b_1,b_2, \ldots , b_n)$.  \\ 
    We have just provided the proof of the inequality   
   \begin{align}    \label{inequality soundsoviel} 
            \frac{\|\vec{a}\|_p}{\|\vec{b}\|_p} \ \  <  \ \  \sum_{k=1}^{n} \ \frac{a_k}{b_k} 
            \ = \  \left( \sum_{k=1}^{n-1} \ \frac{1}{p} \right) +  \frac{1}{1}  \ = \  \frac{n-1}{p} + 1 \; .  
   \end{align}              
     For all \ $p \geq 1 $ \ we have \
           $ \max \{ a_1, a_2 , \ldots, a_n \} = 1 =  \max \{ b_1, b_2 , \ldots, b_n \} $, \ and we get the limits 
           $$ \lim_{p\rightarrow +\infty} \ \left( \|\vec{a}\|_p \right) = 1 \ \ \text{and} \ \
              \lim_{p\rightarrow +\infty} \left( \|\vec{b}\|_p \right) = 1    \ . $$  
     Hence both the left hand side and the right hand side of Inequality \eqref{inequality soundsoviel}
            converge to the number $ 1 $ if \ $ p $ converges to infinity.                
            It means that the inequality  converges into an equality. Therefore the inequality in our main
            theorem \eqref{thesecondone} can not be improved by insertion of a constant factor less than $1$.  \\  
            Finally, the last point of Theorem \eqref{thesecondone} has been discussed and the proof is finished. 
 \end{proof} 
  We formulate Theorem \eqref{thesecondone} for infinite sequences. 
   \begin{corollary} 
   Let us assume two infinite sequences \ (convergent or not)  \\
      \centerline{ $ a_1, a_2, a_3, \ \ldots , \ a_n , a_{n+1} , a_{n+2} , \ \ldots \ldots  \qquad \text{and} \quad 
            b_1, b_2, b_3, \ \ldots , \ b_n , b_{n+1} , b_{n+2} , \ \ldots \ldots    $   }   \\ 
        of positive numbers.  We have the following inequality for all real numbers $ p \neq 0 $   
        $$  a_1^{p} + a_2^{p} + a_3^{p} + \ \ldots \ + a_n^{p} + \ \ldots \ldots \ \ \  \leq \ \ \  
            \left( b_1^{p} + b_2^{p} + b_3^{p} + \ \ldots \ + \ b_n^{p} + \ \ldots \ldots \right) 
            \ \cdot \  \left( \sum_{k=1}^{\infty} \ \frac{a_k}{b_k} \right)^{p}  \  .       $$
    \end{corollary}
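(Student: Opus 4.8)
The plan is to read the Corollary as the infinite-series form of the Main inequality (Theorem~\eqref{thesecondone}): truncate both sequences after the $n$-th term, apply Theorem~\eqref{thesecondone} to the finite tuples, clear the $p$-th root and the denominator, and then let $n\to\infty$. The cross-multiplied shape in which the Corollary is stated is chosen precisely so that the assertion still has a sensible meaning — and stays trivially true — when one or more of the three series diverges.

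First I would dispose of the divergent cases. For $p>0$ the quantity $\left(\sum_{k=1}^{\infty}\frac{a_k}{b_k}\right)^{p}$ is a positive, possibly infinite number and $\sum_{k=1}^{\infty}b_k^{p}\ge b_1^{p}>0$, so if either $\sum_{k=1}^{\infty}b_k^{p}=\infty$ or $\sum_{k=1}^{\infty}\frac{a_k}{b_k}=\infty$ then the right-hand side is $+\infty$ and there is nothing to prove. Hence one may assume $\sum_{k=1}^{\infty}b_k^{p}<\infty$ and $\sum_{k=1}^{\infty}\frac{a_k}{b_k}=:S<\infty$; then $a_k/b_k\le S$ for every $k$, so $a_k^{p}\le S^{p}b_k^{p}$ and $\sum_{k=1}^{\infty}a_k^{p}\le S^{p}\sum_{k=1}^{\infty}b_k^{p}<\infty$, and all three series converge.

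Next, fix $n\ge 2$, apply Theorem~\eqref{thesecondone} to $(a_1,\dots,a_n)$ and $(b_1,\dots,b_n)$, and raise the resulting strict inequality to the $p$-th power: for $p>0$ this is immediate, while for $p<0$ one first rewrites the left-hand $p$-th root through the reciprocal tuples $(1/a_1,\dots,1/a_n)$ and $(1/b_1,\dots,1/b_n)$ exactly as in Case~\textbf{D} of the proof above. This produces
\[
\sum_{k=1}^{n}a_k^{p}\ <\ \left(\sum_{k=1}^{n}b_k^{p}\right)\cdot\left(\sum_{k=1}^{n}\frac{a_k}{b_k}\right)^{p}.
\]
Since the partial sums $\sum_{k=1}^{n}b_k^{p}$ and $\sum_{k=1}^{n}\frac{a_k}{b_k}$ increase with $n$ and $t\mapsto t^{p}$ is monotone, the right-hand side is bounded above by $\bigl(\sum_{k=1}^{\infty}b_k^{p}\bigr)\cdot\bigl(\sum_{k=1}^{\infty}\frac{a_k}{b_k}\bigr)^{p}$; letting $n\to\infty$ on the left, where $\sum_{k=1}^{n}a_k^{p}\to\sum_{k=1}^{\infty}a_k^{p}$ monotonically, the strict inequality relaxes to ``$\le$'' and the Corollary follows.

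The step I expect to be the real obstacle is the passage to negative $p$: raising the inequality of Theorem~\eqref{thesecondone} to a negative power reverses it, so the $p<0$ case cannot be a verbatim copy of the argument above. One has to pass to the reciprocal reformulation, which also interchanges the roles of the two tuples, and then carefully unwind what the resulting inequality says about the original sums $\sum a_k^{p}$ and $\sum b_k^{p}$, all the while keeping track of which series is permitted to be $+\infty$. Everything else is a routine limit of the already proved Theorem~\eqref{thesecondone}.
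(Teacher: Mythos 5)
Your argument for $p>0$ is sound and is the natural route (the paper offers no proof of the Corollary at all, so there is nothing to deviate from): truncate, apply Theorem~\eqref{thesecondone}, raise to the $p$-th power, and pass to the monotone limit; the treatment of the divergent cases is also fine there. The genuine gap is exactly the step you yourself flag as "the real obstacle": for $p<0$ the displayed finite-$n$ inequality
\begin{align*}
\sum_{k=1}^{n}a_k^{p}\ <\ \Bigl(\sum_{k=1}^{n}b_k^{p}\Bigr)\cdot\Bigl(\sum_{k=1}^{n}\frac{a_k}{b_k}\Bigr)^{p}
\end{align*}
is false, and the reciprocal-tuple trick does not rescue it. Since $t\mapsto t^{p}$ is \emph{decreasing} for $p<0$, raising the theorem's inequality $\|\vec a\|_p/\|\vec b\|_p<\sum a_k/b_k$ to the $p$-th power reverses it and yields $\sum a_k^{p}>\bigl(\sum b_k^{p}\bigr)\bigl(\sum a_k/b_k\bigr)^{p}$; applying Theorem~\eqref{thesecondone} to the reciprocal tuples $(1/b_k)$ and $(1/a_k)$ with exponent $q=-p>0$ gives $\sum b_k^{p}<\bigl(\sum a_k^{p}\bigr)\bigl(\sum a_k/b_k\bigr)^{-p}$, which is the same reversed inequality again. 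Concretely, with $n=2$, $p=-1$, $a_k=2^{k}$, $b_k=4^{k}$ one gets $\sum a_k^{p}=3/4$ while $\bigl(\sum b_k^{p}\bigr)\bigl(\sum a_k/b_k\bigr)^{p}=(5/16)\cdot(3/4)^{-1}=5/12$, so your finite-$n$ display fails.

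Moreover, this gap cannot be closed by "carefully unwinding" anything, because the Corollary as stated is itself false for $p<0$: take $a_k=2^{k}$, $b_k=4^{k}$, $p=-1$. Then $\sum_{k\ge1}a_k^{p}=1$, $\sum_{k\ge1}b_k^{p}=1/3$, $\sum_{k\ge1}a_k/b_k=1$, and the claimed inequality reads $1\le 1/3$. The correct consequence of Theorem~\eqref{thesecondone} for $p<0$ is the reversed inequality $\sum a_k^{p}\ \ge\ \bigl(\sum b_k^{p}\bigr)\bigl(\sum a_k/b_k\bigr)^{p}$; your proof should either restrict the Corollary to $p>0$ or state the two directions separately according to the sign of $p$.
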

    The corollary means among other things that if the left hand side converges to infinity,
            the right hand side must do the same. 
    \section{Examples}  
     In the remark after Inequality  \eqref{inequality soundsoviel} we showed that the inequality 
           of our  Theorem  \eqref{thesecondone} 
           can converge into an equality. Here we add a further example. A third example shows that the difference
           between both sides of the main inequality can become arbitrary big.  
                                                                                                              
     Let $ n $ be a natural number, $n \geq 2$. \  
           We define $n$-tuples of positive numbers for each natural number $ K $, let 
           $\vec{a}_K := (a_1,a_2, \ldots , a_n) \ \text{and} \ \vec{b}_K := (b_1,b_2, \ldots , b_n)$. \
           and we define for every number $K \in \mathbbm{N}$  \\
     \centerline{ $ a_1 = a_2 = \ \ldots \ = a_{n-1} :=  10^{-(2 \cdot K)} \: , \quad \text{and} \quad 
                     b_1 = b_2 = \ \ldots \ = b_{n-1} :=  10^{- K} $,  } \\   
           and let \ $ a_n = b_n := 1 $. With our main inequality we get for each $ p \neq 0$ 
   $$     \sqrt[p]{ \frac{ (n-1) \cdot  10^{-(p \cdot 2 \cdot K)} + 1 }{ (n-1) \cdot  10^{-(p \cdot K)} + 1 } } \ = \  
          \sqrt[p]{ \frac{ a_1^{p} + \ a_2^{p} + \ \ldots \ + \ a_n^{p} }
          { b_1^{p} + \ b_2^{p} + \ \ldots \ + \ b_n^{p} } } \ \  <  \ \ 
          \sum_{k=1}^{n} \ \frac{a_k}{b_k} \ = \  
           \frac{n-1}{ 10^{K}}  +  1   \ .               $$   
          We fix any  \ $ p > 0$.  \  If \ $ K $ converges to infinity we get the case that both sides of the
          inequality  converge to the constant $1$.            
                                                                 
      We change the parts of   $\vec{a}_K $ and   $\vec{b}_K$, and we define   \\     
      \centerline{ $ a_1 = a_2 = \ \ldots \ = a_{n-1} :=  10^{-K} \: , \quad \text{and} \quad 
                     b_1 = b_2 = \ \ldots \ = b_{n-1} :=  10^{- (2 \cdot K)} $ ,  } \\   
           and let \ $ a_n = b_n := 1 $  as before.  In this case we get for each $ p \neq 0$ 
   $$     \sqrt[p]{ \frac{ (n-1) \cdot  10^{-(p \cdot K)} + 1 }{ (n-1) \cdot  10^{-(p \cdot 2 \cdot K)} + 1 } } \ = \  
          \sqrt[p]{ \frac{ a_1^{p} + \ a_2^{p} + \ \ldots \ + \ a_n^{p} }
          { b_1^{p} + \ b_2^{p} + \ \ldots \ + \ b_n^{p} } } \ \  <  \ \ 
          \sum_{k=1}^{n} \ \frac{a_k}{b_k} \ = \  
          (n-1) \cdot 10^{K} \ + \ 1   \ .               $$   
          We fix \ $ p > 0$. \  If \ $ K $ converges to infinity we see that the left hand side converges to
          the constant $1$,  while the right hand side  converges to infinity.             
 { $ $ }   \\ 
{ \bf Acknowledgements: }   \rm  We wish to thank  Stefan Th\"urey for careful reading of the paper.
 \\
%

      $ $     \\   \\  
\author{ \centerline{VOLKER W. TH\"UREY,}  \\  
         \centerline{Rheinstr. 91, \ 28199 Bremen, Germany, \ T: 49 (0)421/591777,}  \\
         \centerline{e-mail: \  volker@thuerey.de } }        
%
%

\end{document}